\definecolor {refcol}{RGB}{40,0,255}
\newfont{\footsc}{cmcsc10 at 8truept}
\newfont{\footbf}{cmbx10 at 8truept}
\newfont{\footrm}{cmr10 at 10truept}
\newtheorem{theorem}{Theorem}
\newtheorem{definition}{Definition}
\newtheorem{example}{Example}
\newenvironment{proof}[1][Proof]{\noindent{\textbf {#1}  }}  {\hfill$\Box$\bigskip}
\begin{document}
\title{\textbf{Computing the $A_{\alpha}-$ eigenvalues of a bug}}

\author{Oscar Rojo\thanks{Department of
Mathematics, Universidad Cat\'{o}lica del Norte, Antofagasta, Chile.}}
\date{}
\maketitle

\begin{abstract}
Let $G$ be a simple undirected graph. For $\alpha \in [0,1]$, let
\begin{equation*}
  A_{\alpha}\left(  G\right)  =\alpha D\left(  G\right)  +(1-\alpha)A\left(
G\right)  ,
\end{equation*}
where $A(G)$ is the adjacency matrix of $G$ and $D(G)$ is the diagonal matrix of the degrees of $G$.
In particular, $A_{0}(G)=A(G)$ and $A_{\frac{1}{2}}(G)=\frac{1}{2}Q(G)$ where $Q(G)$ is the signless Laplacian matrix of $G$. A bug $B_{p,q,r}$ is a graph obtained from a complete graph
$K_{p}$ by deleting an edge and attaching paths $P_{q}$ and $P_{r}$ to its
ends. In \cite{HaSt08}, Hansen and Stevanovi\'{c} proved that, among the graphs $G$ of order $n$ and diameter $d$, the largest spectral radius of $A(G)$ is attained by the bug $B_{n-d+2,\lfloor d/2\rfloor,\lceil
d/2\rceil}$. In \cite{LiLu14}, Liu and Lu proved  the same result for the spectral radius of $Q(G)$. Let $\rho_{\alpha}(G)$ be the spectral radius of $A_{\alpha}(G)$. In this note, for a bug $B$ of order $n$ and diameter $d$, it is shown that $(n-d+2)\alpha -1$ is an eigenvalue of $A_{\alpha}(B)$ with multiplicity $n-d-1$ and that the other eigenvalues, among them $\rho_{\alpha}(B)$, can be computed as the eigenvalues of a symmetric tridiagonal matrix of order $d+1$. It is also shown that $\rho_{\alpha}(B_{n-d+2,d/2,d/2})$ can be computed as the spectral radius of a symmetric tridiagonal matrix of order $\frac{d}{2}+1$ whenever $d$ is even.
\end{abstract}

\textbf{AMS classification: }\textit{ 05C50, 15A48}

\textbf{Keywords: }\textit{convex combination of matrices; signless Laplacian;
adjacency matrix; graph diameter; spectral radius; bug.}

\section{Introduction}
Let $G=(V(G),E(G))$ be a simple undirected graph on $n$ vertices with vertex set $V(G)$ and edge set $E(G)$. Let $D(G)$ be the diagonal matrix of the degrees of $G$. Let $A(G)$ be the adjacency matrix of $G.$
In \cite{Nik17}, the family of matrices $A_{\alpha}(G)$,
\[
A_{\alpha}(G)=\alpha D(G)+(1-\alpha)A(G)
\]
with $\alpha \in [0,1]$, is introduced together with a number of some basic results and several open problems.

Observe that $A_{0}\left(  G\right)  =A\left(  G\right)  $ and $A_{1/2}\left(
G\right)  =\frac{1}{2} Q\left(  G\right)  $.

Since $A_1(G)=D(G)$, from now on, we take $\alpha \in [0,1)$.

A bug $B_{p,q,r}$ is a graph obtained from a complete graph
$K_{p}$ by deleting an edge $uv$ and attaching the paths $P_q$ and $P_r$ by one of their end vertices at $u$ and $v$, respectively. Observe that $B_{p,q,r}$ is a graph of order $p+q+r-2$ and diameter $q+r$.

\begin{example}
\label{exam2}
For instance $Bug_{5,3,4}$ is the graph
\begin{eqnarray*}
\begin{tikzpicture}
    \tikzstyle{every node}=[draw,circle,fill=black,minimum size=4pt,
                            inner sep=0pt]
                            % 0 grados y se continua con 180+(360/n)
    \draw (0,-1) node (1) [label=below:$$] {}
         (2,0) node (2) [label=below:$v$] {}
         (3,0) node (3) [label=below:$$] {}
         (4,0) node (4) [label=below:$$] {}
         (5,0) node (5) [label=below:$$] {}
         (1,1) node (6) [label=below:$$] {}
         (-1,1) node (7) [label=below:$$] {}
         (-2,0) node (8) [label=below:$u$] {}
         (-3,0) node (9) [label=above:$$] {}
         (-4,0) node (10) [label=below:$$] {};
\draw (1)--(2); \draw (2)--(3); \draw (3)--(4); \draw (4)--(5); \draw (1)--(6); \draw (1)--(7); \draw (2)--(6); \draw (2)--(7); \draw (1)--(8); \draw (8)--(9);\draw (9)--(10);\draw (8)--(7);\draw (8)--(6); \draw (7)--(6);
\end{tikzpicture}
\end{eqnarray*}
of $10$ vertices and diameter $7$ which is obtained from $K_5$ deleting the edge $uv$ and attaching the paths $P_3$ and $P_4$ at $u$ and $v$ respectively.
\end{example}

Let $\rho(M)$ be the spectral radius of the matrix $M$.

In \cite{HaSt08}, Hansen and Stevanovi\'{c} proved the following result:
\begin{theorem}
\label{ad}Let $G$ be a graph of
order $n$ and diameter $d$. If $d=1$, then
$\rho(A(G))=\rho (A(K_n))$. If $d\geq2$, then
\[
\rho(A(G)) \leq\rho(A(B_{n-d+2,\lfloor d/2\rfloor,\lceil
d/2\rceil})).
\]
The equality holds if and only if $G=B_{n-d+2,\lfloor d/2\rfloor,\lceil
d/2\rceil}$.
\end{theorem}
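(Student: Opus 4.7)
The plan is to separate the trivial $d=1$ case (where the diameter constraint forces $G=K_n$ and the statement is immediate) from the substantive case $d\ge 2$. For $d\ge 2$ I would fix an extremal graph $G^*$ of order $n$ and diameter exactly $d$ that maximises $\rho(A(\cdot))$ over this class, choose vertices $u,v\in V(G^*)$ with $d_{G^*}(u,v)=d$ together with a shortest $uv$-path $P\colon u=w_0 w_1 \cdots w_d=v$, and try to identify $G^*$ with the bug $B_{n-d+2,\lfloor d/2\rfloor,\lceil d/2\rceil}$. Two elementary principles will drive the argument: Perron--Frobenius edge monotonicity (adding an edge to a connected graph strictly increases $\rho(A)$), and the window observation that any $x\notin V(P)$ adjacent to both $w_i$ and $w_j$ must satisfy $|i-j|\le 2$, for otherwise the $uv$-path $w_0\cdots w_i x w_j\cdots w_d$ would be strictly shorter than $d$.

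Combining these two principles I would first establish \emph{saturation}: every non-edge of $G^*$ whose addition would leave the diameter equal to $d$ is already present. Together with the window observation, saturation forces each $x\in V(G^*)\setminus V(P)$ to be adjacent to some consecutive pair $\{w_i,w_{i+1}\}$ of path vertices, and forces the extra $n-d-1$ vertices to be pairwise adjacent, producing a dense clique-like block attached to $P$. The next step is to argue that \emph{all} extra vertices attach to the \emph{same} consecutive pair $\{w_k,w_{k+1}\}$, producing a copy of $K_{n-d+2}$ minus the edge $w_kw_{k+1}$ with the two arcs $w_0\cdots w_k$ and $w_{k+1}\cdots w_d$ dangling as pendant paths; I would do this by showing that any other admissible attachment pattern can be transformed into this one by relocating extra vertices along $P$ in a way that strictly increases the Rayleigh quotient on the Perron eigenvector. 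At this point $G^*$ is of the form $B_{n-d+2,k+1,d-k}$ for some $k$.

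Finally, to pin down the balanced split $\{k+1,d-k\}=\{\lfloor d/2\rfloor,\lceil d/2\rceil\}$, I would compare $B_{n-d+2,q,r}$ and $B_{n-d+2,q+1,r-1}$ whenever $q< r-1$ via a Kelmans-style path-balancing transformation: moving a pendant edge from the longer tail onto the shorter one, the Perron eigenvector of the unbalanced bug is expected to witness a strictly larger Rayleigh quotient on the more balanced bug, mirroring the classical fact that among trees of fixed diameter the balanced double broom is spectrally extremal. Strictness of all the inequalities used along the way delivers the uniqueness clause.

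The main obstacle, I expect, is the clustering argument in the second paragraph: the saturation and window principles by themselves do not rule out configurations in which different subsets of extra vertices attach to overlapping but distinct consecutive pairs along $P$, say $\{w_{k-1},w_k\}$ and $\{w_k,w_{k+1}\}$. Excluding these intermediate configurations requires a genuine spectral comparison between two admissible candidate graphs, and making that comparison valid uniformly in $n$, $d$ and the choice of split is where most of the combinatorial work sits.
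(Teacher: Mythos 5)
First, note that the paper you are reading does not prove this statement at all: it is quoted verbatim from Hansen and Stevanovi\'{c} \cite{HaSt08} as background, so there is no internal proof to compare against, and your proposal is in effect an attempt to reprove their theorem from scratch. Measured against that task, the outline follows the standard extremal strategy (fix a maximiser, use edge-monotonicity of the Perron root to saturate, constrain attachments via a diametral path, then balance the two legs), but it leaves genuine gaps at exactly the decisive steps. The most concrete logical error is in your second paragraph: you claim that saturation plus the window observation already force the $n-d-1$ extra vertices to be pairwise adjacent. They do not. If $x$ attaches near $w_i$ and $y$ attaches near $w_j$ with $j-i\ge 3$, then adding the edge $xy$ creates a $u$--$v$ walk of length $d+2-(j-i)<d$, so saturation does not compel $xy$ to be an edge; pairwise adjacency of the extra vertices only becomes available \emph{after} you have proved the clustering claim, which is the step you yourself flag as the main obstacle and for which you offer only the intention of a ``relocation'' argument. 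Relatedly, the window observation says nothing about extra vertices adjacent to at most one vertex of $P$ (or to none, being linked to $P$ only through other extra vertices), so even the statement ``each extra vertex is adjacent to a consecutive pair $\{w_i,w_{i+1}\}$'' needs an argument you have not given.

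The balancing step is also not yet a proof. The Li--Feng/Kelmans-type lemmas you appeal to compare $\rho(G_{k,\ell})$ with $\rho(G_{k+1,\ell-1})$ for two pendant paths hanging from the \emph{same} vertex; in a bug the two paths hang from the two distinct nonadjacent vertices $u,v$ of $K_{p}$, so the classical lemma does not apply directly. A Rayleigh-quotient comparison using the Perron vector of the unbalanced bug requires an inequality between the eigenvector entries at the two path ends (essentially that the entry at the tip of the longer leg is no larger than the entry at the tip of the shorter leg), and establishing that inequality is precisely the content of the lemma you would need to prove --- it cannot be imported by analogy with double brooms. Until the clustering and balancing comparisons are actually carried out (as Hansen and Stevanovi\'{c} do over several lemmas in \cite{HaSt08}), the proposal is a plausible roadmap rather than a proof, and the uniqueness clause in particular depends on strict inequalities in both of the missing steps.
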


In \cite{LiLu14}, Liu and Lu proved  the same result for the spectral radius of $Q(G)$:
\begin{theorem}
Let $G$ be a graph of order $n$ and diameter $d$. If $d=1$, then $\rho_(Q(G)) =\rho(Q(K_{n}))$. If $d\geq2$, then
\[
\rho(Q(G)) \leq\rho(Q(B_{n-d+2,\lfloor d/2\rfloor,\lceil
k/2\rceil})).
\]
The equality holds if and only if $G=B_{n-d+2,\lfloor d/2\rfloor,\lceil
d/2\rceil}$.
\end{theorem}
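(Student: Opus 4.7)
The $d=1$ case is immediate since a graph of diameter $1$ is $K_n$. For $d\geq 2$, the plan is to adapt the approach used for Theorem~\ref{ad} to the signless Laplacian, using three ingredients: strict monotonicity of $\rho(Q)$ under edge addition, a structural reduction to the bug family, and a final balancing of the two pendant paths.

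First, I would establish that $\rho(Q)$ is strictly monotone under edge addition in connected graphs. For a non-edge $uv$ of a connected $G$, the perturbation $Q(G+uv)-Q(G)=(\mathbf{e}_u+\mathbf{e}_v)(\mathbf{e}_u+\mathbf{e}_v)^{T}$ is rank-one and positive semidefinite; combined with the strict positivity of the Perron eigenvector of the irreducible matrix $Q(G)$, this yields $\rho(Q(G+uv))>\rho(Q(G))$. Consequently, any extremal $G^{*}$ of order $n$ and diameter $d$ must be edge-maximal in the sense that every missing edge, if added, would drop the diameter below $d$; the same strict monotonicity will also deliver uniqueness of the extremizer at the end.

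Next, for an edge-maximal $G^{*}$, I would fix a diametral path $P=v_0v_1\cdots v_d$ and analyze how the remaining $n-d-1$ vertices attach to $P$. The aim is to show that the non-path vertices together with three central path vertices $v_{q-1},v_q,v_{q+1}$ (for some $q$) induce $K_{n-d+2}$ with the chord $v_{q-1}v_{q+1}$ removed, and that no other edges exist. Each missing edge inside this prospective clique, once added, preserves the diameter (its endpoints are all mid-path), so by maximality it must already be present; the absent chord $v_{q-1}v_{q+1}$ and every other candidate edge in $G^{*}$ would strictly shorten the $v_0$-$v_d$ distance and is therefore forbidden. This identifies $G^{*}$ as a bug $B_{n-d+2,q,r}$ with $q+r=d$.

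Finally, among bugs $B_{n-d+2,q,r}$ with $q+r=d$, I would show $\rho(Q)$ is maximized when $|q-r|\leq 1$ via a pendant-path grafting comparison: moving one vertex from the longer pendant path to the shorter one strictly increases $\rho(Q)$, verified by a Rayleigh-quotient calculation using the Perron eigenvector, whose entries decrease monotonically outward along either pendant path. The main obstacle is the structural step: the case analysis ruling out edge configurations outside the claimed clique is delicate near the center of $P$, since one must verify edge-by-edge that each candidate either is forced by maximality or creates a strictly shorter $v_0$-$v_d$ path. A cleaner alternative may be to first apply a Kelmans-type transformation — which does not decrease $\rho(Q)$ — to concentrate the non-path vertices into a common neighborhood, and then invoke edge-maximality to determine the precise attachment point; the balancing step then follows from classical grafting inequalities for the signless Laplacian.
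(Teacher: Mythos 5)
This statement is not proved in the paper at all: it is quoted verbatim as a known result of Liu and Lu \cite{LiLu14} (the signless Laplacian analogue of the Hansen--Stevanovi\'{c} theorem \cite{HaSt08}), and the present note only uses it as motivation. So there is no in-paper proof to compare against; what you have written is an outline of the kind of argument used in the cited reference, and as an outline it has a genuine gap at its central step.

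The first ingredient is fine: $Q(G+uv)-Q(G)=(\mathbf{e}_u+\mathbf{e}_v)(\mathbf{e}_u+\mathbf{e}_v)^{T}$ is positive semidefinite, and with the positive Perron vector of the irreducible $Q(G)$ you do get strict monotonicity, hence edge-maximality of an extremizer. The gap is the structural reduction: edge-maximality subject to diameter $d$ does \emph{not} imply the graph is a bug, so your claim that the non-path vertices together with three central path vertices must induce $K_{n-d+2}$ minus a chord cannot be extracted from maximality alone. For example, with $d=4$ and $n=7$, take the path $v_0v_1v_2v_3v_4$ and attach $w_1$ to $v_0,v_1,v_2$ and $w_2$ to $v_2,v_3,v_4$; every further edge shortens the $v_0$--$v_4$ distance, so this graph is edge-maximal of diameter $4$ but is not a bug. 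Your argument implicitly assumes the extra vertices already share a common neighbourhood of three consecutive mid-path vertices, which is exactly what needs to be proved. Discriminating among the many edge-maximal configurations requires spectral comparisons (Kelmans-type transformations or edge rotations shown not to decrease $\rho(Q)$ while controlling the diameter), and that is the substantive content of \cite{LiLu14}; your closing sentence gestures at this but does not supply it. The final balancing step (that $\rho(Q(B_{n-d+2,i,d-i}))$ increases as $i\to\lfloor d/2\rfloor$) likewise needs an actual grafting lemma with a proof of the monotone decay of the Perron entries along the pendant paths; it is plausible but not established by the Rayleigh-quotient remark as stated.
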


Let $\rho_{\alpha}(G)$ be the spectral radius of $A_{\alpha}(G)$. From the Perron - Frobenius Theory for nonnegative matrices, for a connected graph $G$, $\rho_{\alpha}(G)$ is a simple eigenvalue of $A_{\alpha}(G)$ having a positive eigenvector.

In this note, for a bug $B$ of order $n$ and diameter $d$, we prove that $(n-d+2)\alpha -1$ is an eigenvalue of $A_{\alpha}(B)$ with multiplicity $n-d-1$ and that the other eigenvalues, among them $\rho_{\alpha}(B)$, can be computed as the eigenvalues of a symmetric tridiagonal matrix of order $d+1$. We also prove that $\rho_{\alpha}(B_{n-d+2,d/2,d/2})$ can be computed as the spectral radius of a symmetric tridiagonal matrix of order $\frac{d}{2}+1$ whenever $d$ is even. This note extends the results obtained in \cite{NEO} for the signless Laplacian matrix of bugs of order $n$ and diameter $d$.

\section{On the $\alpha-$ spectrum of a bug}
%Given two vertex disjoint graphs $G_{1}$ and $G_{2},$ the join of $G_{1}$
%and $G_{2}$ is the graph $G=G_{1}\vee G_{2}$ such that
%\begin{equation*}
%  V\left( G\right)=V\left( G_{1}\right) \cup V\left( G_{2}\right)
%\end{equation*}
% and
% \begin{equation*}
% E\left( G\right) =E(G_{1})\cup E(G_{2})\cup \left\{ xy:x\in V\left(
%G_{1}\right) ,y\in V\left( G_{2}\right) \right\}.
% \end{equation*}
We recall the notion of the $H-join$ of graphs \cite{Cardoso1, Cardoso}. Let $H$ be a graph of order $k$ with $V(H)=\{1,\ldots ,k\}$. Let $\left\{
G_{1},\ldots ,G_{k}\right\} $ be a set of pairwise vertex disjoint graphs. For $1 \leq j \leq k$, the vertex $j\in V(H)$ is assigned to the graph $G_{j}$. The $H-join$ of the graphs $G_1,\ldots,G_k$, denoted by
\begin{equation*}
G=\bigvee_{H}{\{G_{j}:1\leq j\leq k\}},
\end{equation*}
is the graph $G$ obtained from the graphs $G_{1},\ldots ,G_{k}$ and the edges
connecting each vertex of $G_{i}$ with all the vertices of $G_{j}$ if and only if $ij\in E\left( H\right) .$ That is, $G$ is the graph with vertex set
\begin{equation*}
V(G)=\bigcup_{i=1}^{k}{V(G_{i})}
\end{equation*}
 and edge set
 \begin{equation*}
E(G)=\left( \bigcup_{i=1}^{k}{E(G_{i})}\right) \cup \left( \bigcup_{ij\in
E\left( H\right) }{\{uv:u\in V(G_{i}),v\in V(G_{j})\}}\right) .
 \end{equation*}

Clearly if each $G_{i}$ is a graph of order of $n_{i}$, then $H-join$ of $G_{1},\ldots
,G_{k}$ is a graph of order $n_{1}+n_{2}+\ldots +n_{k}.$

In particular, for $i=1,\ldots,d-1$, $B_{n-d+2,i,d-i}$ is the $P_{d+1}-join$ of the regular graphs $G_{1}=\ldots
=G_{i}=K_{1},G_{i+1}=K_{n-d},G_{i+2}=\ldots =G_{d+1}=K_{1}.$ Since $B_{n-d+2,i,d-i}$ and $B_{n-d+2,d-i,i},$ are isomorphic graphs, we may take $1\leq i \leq \lfloor \frac{d}{2} \rfloor$.

\begin{example} Below are displayed the non-isomorphic bugs of order $10$ and diameter $7$:\\

$B_{5,1,6}$ is the $P_8-join$ of $G_1=K_1$, $G_2=K_3$ and $G_i=K_1$ for $i=3,\ldots,8$:
\begin{eqnarray*}
\begin{tikzpicture}
    \tikzstyle{every node}=[draw,circle,fill=black,minimum size=4pt,
                            inner sep=0pt]
                            % 0 grados y se continua con 180+(360/n)
    \draw (0,-1) node (1) [label=below:$$] {}
         (2,0) node (2) [label=below:$v$] {}
         (3,0) node (3) [label=below:$$] {}
         (4,0) node (4) [label=below:$$] {}
         (5,0) node (5) [label=below:$$] {}
         (1,1) node (6) [label=below:$$] {}
         (-1,1) node (7) [label=below:$$] {}
         (-2,0) node (8) [label=below:$u$] {}
         (7,0) node (9) [label=above:$$] {}
         (6,0) node (10) [label=below:$$] {};

\draw (1)--(2); \draw (2)--(3); \draw (3)--(4); \draw (4)--(5); \draw (1)--(6); \draw (1)--(7); \draw (2)--(6); \draw (2)--(7); \draw (1)--(8); \draw (10)--(9);\draw (5)--(10);\draw (8)--(7);\draw (8)--(6); \draw (7)--(6);
\end{tikzpicture}
\end{eqnarray*}

$B_{5,2,5}$ is the $P_8-join$ of $G_1=G_2=K_1$, $G_3=K_3$ and $G_i=K_1$ for $i=4,\ldots,8$:
\begin{eqnarray*}
\begin{tikzpicture}
    \tikzstyle{every node}=[draw,circle,fill=black,minimum size=4pt,
                            inner sep=0pt]
                            % 0 grados y se continua con 180+(360/n)
    \draw (0,-1) node (1) [label=below:$$] {}
         (2,0) node (2) [label=below:$v$] {}
         (3,0) node (3) [label=below:$$] {}
         (4,0) node (4) [label=below:$$] {}
         (5,0) node (5) [label=below:$$] {}
         (1,1) node (6) [label=below:$$] {}
         (-1,1) node (7) [label=below:$$] {}
         (-2,0) node (8) [label=below:$u$] {}
         (-3,0) node (9) [label=above:$$] {}
         (6,0) node (10) [label=below:$$] {};

\draw (1)--(2); \draw (2)--(3); \draw (3)--(4); \draw (4)--(5); \draw (1)--(6); \draw (1)--(7); \draw (2)--(6); \draw (2)--(7); \draw (1)--(8); \draw (8)--(9);\draw (5)--(10);\draw (8)--(7);\draw (8)--(6); \draw (7)--(6);
\end{tikzpicture}
\end{eqnarray*}

$B_{5,3,4}$ is the $P_8-join$ of $G_1=G_2=G_3=K_1$, $G_4=K_3$ and $G_i=K_1$ for $i=5,\ldots,8$:
\begin{eqnarray*}
\begin{tikzpicture}
    \tikzstyle{every node}=[draw,circle,fill=black,minimum size=4pt,
                            inner sep=0pt]
                            % 0 grados y se continua con 180+(360/n)
   \draw (0,-1) node (1) [label=below:$$] {}
         (2,0) node (2) [label=below:$v$] {}
         (3,0) node (3) [label=below:$$] {}
         (4,0) node (4) [label=below:$$] {}
         (5,0) node (5) [label=below:$$] {}
         (1,1) node (6) [label=below:$$] {}
         (-1,1) node (7) [label=below:$$] {}
         (-2,0) node (8) [label=below:$u$] {}
         (-3,0) node (9) [label=above:$$] {}
         (-4,0) node (10) [label=below:$$] {};

\draw (1)--(2); \draw (2)--(3); \draw (3)--(4); \draw (4)--(5); \draw (1)--(6); \draw (1)--(7); \draw (2)--(6); \draw (2)--(7); \draw (1)--(8); \draw (8)--(9);\draw (9)--(10);\draw (8)--(7);\draw (8)--(6); \draw (7)--(6);
\end{tikzpicture}
\end{eqnarray*}
\end{example}

It is immediate that if $G$ is a $r-$ regular graph of order $n$ then $A_{\alpha}(G)=\alpha r I_n + (1-\alpha) A(G)$ and $\rho(A(G))=r$. Hence $\rho_{\alpha}(G)=\alpha r + (1-\alpha) r= r$ for any $r$-regular graph $G$.

Let $\sigma(M)$ be the spectrum of a matrix $M$. 
In \cite{Cardoso1}, Theorem $5$, the spectrum of the adjacency matrix of the $H$- join of regular graphs is obtained. The version of the corresponding result for the spectrum of $A_{\alpha}$ is given below and its proof is similar.

\begin{theorem}
\label{hjoin}
Let $H$ be a graph of order $k$. Let $G=\bigvee_{H}{\{G_{j}:1\leq j\leq k\}}$. If each $G_j$ is a $r_{j}$-regular
graph of order $n_j$ then
\begin{equation*}
  \sigma(A_{\alpha}(G)) =\cup _{G_{j}\neq K_{1}}\left\{
\alpha s_{j}+\lambda :\lambda \in \sigma \left(A_{\alpha}(G_{j}) \right)
\backslash \left\{ r_{j}\right\} \right\} \cup \sigma \left( M(G) \right)
\end{equation*}
where $M(G)$ is a matrix of order $k \times k$ given by
\begin{equation}\label{MG}
M(G)=\left[
\begin{array}{cccc}
\alpha s_{1}+r_{1} & \beta \delta _{12}  \sqrt{n_{1}n_{2}} & \ldots  & \beta \delta _{1k}\sqrt{%
n_{1}n_{k}} \\
\beta \delta _{12}\sqrt{n_{1}n_{2}} & \alpha s_{2}+r_{2} & \ddots  & \vdots  \\
\vdots  & \ddots  & \ddots  & \beta \delta _{\left( k-1\right) k}\sqrt{n_{k-1}n_{k}%
} \\
\beta \delta _{1k}\sqrt{n_{1}n_{k}} & \ldots  &\beta  \delta _{\left( k-1\right) k}\sqrt{%
n_{k-1}n_{k}} & \alpha s_{k}+r_{k}%
\end{array}%
\right]
\end{equation}
with
\[
\beta=1-\alpha,
\]
\[
\delta _{ij}=\left\{
\begin{array}{c}
1\text{ if }ij\in E\left( H\right)  \\
0\text{ otherwise}%
\end{array}%
\right.
\]
and, for $j=1,2,\ldots ,k,$
\begin{equation*}
  s_{j}=\sum_{jl\in E\left( H\right) \ }n_{l}.
\end{equation*}
\end{theorem}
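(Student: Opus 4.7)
The plan is to mimic the block-decomposition argument used in \cite{Cardoso1} for the adjacency matrix, adapted to the convex combination $A_\alpha(G) = \alpha D(G) + (1-\alpha) A(G)$. First I would exploit that, because each $G_j$ is $r_j$-regular, every vertex of $G_j$ has degree $r_j + s_j$ in $G$; consequently $A_\alpha(G)$ decomposes into blocks indexed by $V(G_1),\ldots,V(G_k)$, with $(j,j)$-diagonal block equal to $\alpha s_j I_{n_j} + A_\alpha(G_j)$ and $(i,j)$ off-diagonal block equal to $(1-\alpha)\delta_{ij} J_{n_i \times n_j}$, where $J_{n_i \times n_j}$ is the all-ones matrix. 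The key property used throughout is that $A_\alpha(G_j) \mathbf{1}_{n_j} = r_j \mathbf{1}_{n_j}$ for each regular $G_j$, so by symmetry any eigenvector of $A_\alpha(G_j)$ associated with an eigenvalue $\lambda \neq r_j$ is orthogonal to $\mathbf{1}_{n_j}$.

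Then I would construct two orthogonal families of eigenvectors spanning $\mathbb{R}^{n}$. For the first family, fix $G_j \neq K_1$ and $\lambda \in \sigma(A_\alpha(G_j)) \setminus \{r_j\}$ with eigenvector $v \perp \mathbf{1}_{n_j}$; the lift $x$ that equals $v$ on $V(G_j)$ and vanishes elsewhere satisfies $A_\alpha(G)x = (\alpha s_j + \lambda)x$, since each off-diagonal block contributes $(1-\alpha)\delta_{ij} J_{n_i \times n_j} v = 0$. This produces the first term of the claimed spectrum. For the second family, consider vectors of the form $x = (c_1 \mathbf{1}_{n_1}/\sqrt{n_1}, \ldots, c_k \mathbf{1}_{n_k}/\sqrt{n_k})^{T}$. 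Using $J_{n_j \times n_l}(\mathbf{1}_{n_l}/\sqrt{n_l}) = \sqrt{n_j n_l}\,\mathbf{1}_{n_j}/\sqrt{n_j}$, the eigenvalue equation reduces to $M(G)\mathbf{c} = \mu \mathbf{c}$, and the normalization by $\sqrt{n_j}$ is precisely what makes $M(G)$ symmetric in the form \eqref{MG}.

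Finally I would close with a dimension count: the first family contributes $\sum_{j\,:\,G_j \neq K_1}(n_j - 1) = n - k$ mutually orthogonal eigenvectors (when $G_j = K_1$ the set $\sigma(A_\alpha(G_j)) \setminus \{r_j\}$ is empty), and the second family contributes $k$ more, giving $n$ orthogonal eigenvectors of $A_\alpha(G)$ in total, which exhausts the spectrum. The only subtle point is the bookkeeping of the copy of $r_j$ removed from each $\sigma(A_\alpha(G_j))$: that eigenvalue is absorbed into the constant-on-blocks subspace where $M(G)$ acts, so the multiplicities add correctly and no eigenvalue is double-counted. The remainder is a routine block-matrix verification that parallels \cite{Cardoso1}, with the extra diagonal shift $\alpha s_j$ the only feature specific to $A_\alpha$.
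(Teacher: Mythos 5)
Your proposal is correct and is precisely the argument the paper has in mind: the paper omits the proof, stating only that it is "similar" to that of Theorem 5 in \cite{Cardoso1}, and your block decomposition with diagonal blocks $\alpha s_j I_{n_j}+A_\alpha(G_j)$, lifted eigenvectors orthogonal to $\mathbf{1}_{n_j}$, and reduction of the constant-on-blocks subspace to $M(G)$ is exactly that adaptation. The dimension count and the accounting of the removed copy of $r_j$ are handled correctly, so nothing is missing.
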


For brevity, let $B(i)= B_{n-d+2,i,d-i}$ with $1\leq i\leq \lfloor \frac{d}{2}\rfloor$.

Since, for each $1\leq i\leq k$, $B(i)$ is the $P_{d+1}-join$ of the regular graphs $G_{1}=\ldots
=G_{i}=K_{1},G_{i+1}=K_{n-d},G_{i+2}=\ldots =G_{d+1}=K_{1}$, Theorem \ref{hjoin} can be applied to determine the spectrum of $A_{\alpha}(B(i))$ with the advantage that the matrix $M(B(i))$ in (\ref{MG}), of order $d+1$,  is a symmetric tridiagonal matrix. For instance, the matrices $M(B(1))$, $M(B(2))$ and $M(B(3))$ are
\begin{equation*}
  M(B(1))=\left[
        \begin{array}{cccccccc}
          \alpha(n-d) & \beta \sqrt{n-d} &  &  &  &  & & \\
         \beta \sqrt{n-d}  & 2\alpha +n-d-1 & \beta \sqrt{n-d} &  &  &  & & \\
           & \beta \sqrt{n-d} & \alpha(n-d+1) & \beta &  &  & & \\
           &  & \beta & 2\alpha & \ddots &  & & \\
           &  &  & \ddots & \ddots & \ddots &  &\\
           &  &  &  & \ddots & 2\alpha & \beta &\\
           &  &  &  &  & \beta & 2\alpha & \beta \\
           &  &  &  &  &  &  \beta & \alpha
        \end{array}
      \right],
\end{equation*}
\begin{equation*}
M(B(2))=\left[
        \begin{array}{cccccccc}
          \alpha & \beta &  &  &  &  & & \\
         \beta & \alpha( n-d+1) & \beta \sqrt{n-d} &  &  &  & & \\
           & \beta \sqrt{n-d} & 2\alpha + n-d-1 & \beta\sqrt{n-d} &  &  & & \\
           &  & \beta \sqrt{n-d} & \alpha( n-d+1) & \beta &  & & \\
           &  &  & \beta & 2\alpha & \ddots &  &\\
           &  &  &  & \ddots & \ddots & \beta &\\
           &  &  &  &  & \beta & 2\alpha & \beta \\
           &  &  &  &  &  &  \beta & \alpha
        \end{array}
      \right]
 \end{equation*}
and
\begin{equation*}
M(B(3))=\left[
        \begin{array}{ccccccccc}
          \alpha & \beta &  &  &  &  & & &\\
         \beta & 2 \alpha & \beta &  &  &  & & &\\
           & \beta & \alpha(n-d+1) & \beta\sqrt{n-d} &  &  & & & \\
           &  & \beta \sqrt{n-d} & 2\alpha + n-d-1 & \beta &  & & &\\
           &  &  & \beta\sqrt{n-d} & \alpha(n-d+1) & \ddots &  & &\\
           &  &  &  & \beta & 2\alpha & \ddots & &\\
           &  &  &  &  & \ddots & \ddots & \beta & \\
           &  &  &  &  &  &  \ddots & 2\alpha & \beta\\
           &  &  &  &  &  &  & \beta & \alpha
        \end{array}
      \right].
 \end{equation*}
%The diagonal entries (\emph{d.e.}) and the codiagonal entries (\emph{cd.e.}) of $B_(1),B_(2)$ and $B_(3)$ are displayed below
%\begin{equation*}
%B(1):
%  \begin{array}{ccccccccc}
%    d. e. & \alpha (n-d) & 2 \alpha + n-d-1 & \alpha (n-d+1) & 2\alpha & \ldots & \ldots & 2\alpha & \alpha \\
%    cd. e. & \beta \sqrt{n-d} & \beta \sqrt{n-d} & \beta & \ldots & \ldots & \ldots & \beta & \\
%  \end{array}
%  \end{equation*}
% \begin{equation*}
%B(2):
%  \begin{array}{cccccccccc}
%    d. e. & \alpha & \alpha( n-d+1) & 2\alpha + n-d-1 & \alpha (n-d+1) & 2 \alpha & \ldots & 2 \alpha& \alpha \\
%    cd. e. & \beta & \beta \sqrt{n-d} & \beta \sqrt{n-d} & \beta & \ldots & \ldots & \beta & \\
%  \end{array}
%  \end{equation*}
% \begin{equation*}
%B(3):
%  \begin{array}{ccccccccccc}
%    d. e. & \alpha & 2\alpha & \alpha(n-d+1) & 2\alpha+n-d-1 & \alpha(n-d+1) & 2\alpha & \ldots & 2\alpha & \alpha\\
%    cd. e. & \beta & \beta & \beta \sqrt{n-d} & \beta \sqrt{n-d} & \beta & \ldots & \ldots &\beta \\
%  \end{array}
%  \end{equation*}
\begin{definition} Let $\alpha \in [0,1)$ and $\beta =1-\alpha$. Let
\[
R =\left[
\begin{array}{ccc}
\alpha (n-d +1) & \beta \sqrt{n-d} &  0\\
\beta \sqrt{n-d} & 2\alpha +(n-d-1) & \beta \sqrt{n-d} \\
0 & \beta \sqrt{n-d} &\alpha (n-d +1)%
\end{array}%
\right],
\]

\[
T_{1}=[\alpha]
\]
 and, for $s\geq 2$, let
\begin{equation*}
T_{s}=\left[
\begin{array}{ccccc}
\alpha & \beta &  &  &  \\
\beta & 2 \alpha & \beta &  &  \\
& \beta & \ddots  & \ddots  &  \\
&  & \ddots  & 2 \alpha & \beta \\
&  &  & \beta & 2 \alpha%
\end{array}%
\right]
\end{equation*}
of order $s \times s$.
\end{definition}

 Let $I$ be the identity matrix, $0$ the zero matrix, $J$ the exchange matrix (the matrix with ones in the secondary diagonal and zeros elsewhere) and $F$ the matrix whose entries are zeros except for the entry in the last row and first column which is equal to $1$. All of them of the appropriate order.

 The use of Theorem \ref{hjoin} yields to :

\begin{theorem}
\label{bug}
Let $\alpha \in [0,1)$ and $\beta =1-\alpha$. The eigenvalues of $A_{\alpha}(B(i))$ are $(n-d+2)\alpha-1$ with multiplicity $n-d-1$ and the other eigenvalues, among them $\rho_{\alpha}(B(i))$, can be computed as the eigenvalues of the $\left(
d+1\right) \times \left( d+1\right) $ symmetric tridiagonal matrix
\begin{equation*}
  M(B(i))=\left[
\begin{array}{cc}
X_{i} & \beta F \\
\beta F^{T} & JT_{d-i-1}J%
\end{array}%
\right]
\end{equation*}
where
\begin{equation*}
  X_{1}=\left[
\begin{array}{ccc}
\alpha (n-d) & \beta \sqrt{n-d} & 0  \\
\beta \sqrt{n-d} & 2 \alpha +n-d-1 &\beta \sqrt{n-d} \\
0 & \beta\sqrt{n-d} & \alpha (n-d+1)%
\end{array}%
\right]
\end{equation*}
whenever $i=1$, and
\begin{equation*}
  X_{i}=\left[
\begin{array}{cc}
T_{i-1} & \beta F \\
\beta F^T & R
\end{array}%
\right]
\end{equation*}
whenever $2\leq i\leq \left\lfloor \frac{d}{2}\right\rfloor $.
\end{theorem}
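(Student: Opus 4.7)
My plan is to invoke Theorem \ref{hjoin} directly on the $P_{d+1}$-join representation of $B(i)=B_{n-d+2,i,d-i}$, whose components are the regular graphs $G_1=\cdots=G_i=K_1$, $G_{i+1}=K_{n-d}$, $G_{i+2}=\cdots=G_{d+1}=K_1$. Writing $n_j$ for the order and $r_j$ for the common degree of $G_j$, we have $n_j=1,\,r_j=0$ for $j\neq i+1$ and $n_{i+1}=n-d,\,r_{i+1}=n-d-1$. Since the hypothesis $1\le i\le\lfloor d/2\rfloor$ places vertex $i+1$ strictly between the endpoints of $P_{d+1}$, its $P_{d+1}$-neighbours are vertices $i$ and $i+2$, giving $s_{i+1}=n_i+n_{i+2}=2$.

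Among the $G_j$'s only $K_{n-d}$ is not a singleton. A direct computation on $A_\alpha(K_{n-d})=\alpha(n-d-1)I+\beta(J-I)$ shows that its spectrum consists of the Perron eigenvalue $n-d-1$ (simple) together with $\alpha(n-d-1)-\beta=\alpha(n-d)-1$ of multiplicity $n-d-1$. Discarding the Perron eigenvalue and shifting by $\alpha s_{i+1}=2\alpha$, as prescribed by Theorem \ref{hjoin}, yields the eigenvalue $(n-d+2)\alpha-1$ with multiplicity $n-d-1$. This is the first claim.

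For the remaining eigenvalues I would simply read off the entries of $M(B(i))$ from (\ref{MG}). The off-diagonal entries vanish except between consecutive vertices of $P_{d+1}$, where they equal $\beta\sqrt{n_jn_{j+1}}$; this is $\beta\sqrt{n-d}$ across the two edges incident to vertex $i+1$ and $\beta$ elsewhere. The diagonal entry $\alpha s_j+r_j$ at position $j$ takes five possible values: $\alpha(n-d)$ if $j=1$ and $i=1$ (the only case in which an endpoint of $P_{d+1}$ is adjacent to $K_{n-d}$); $\alpha$ at every other endpoint of $P_{d+1}$; $\alpha(n-d+1)$ at the two interior vertices $j=i$ and $j=i+2$, which are adjacent to the clique; $2\alpha+n-d-1$ at $j=i+1$; and $2\alpha$ at all other interior vertices. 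Splitting $\{1,\ldots,d+1\}$ as $\{1,\ldots,i+2\}\cup\{i+3,\ldots,d+1\}$, the upper-left block is $X_i$: the three special diagonal entries $\alpha(n-d+1),\,2\alpha+n-d-1,\,\alpha(n-d+1)$ at positions $i,i+1,i+2$ assemble into $R$, preceded for $i\ge 2$ by $T_{i-1}$ which carries the $\alpha$ endpoint and $2\alpha$'s; the lower-right block is a path segment with $2\alpha$'s and an $\alpha$ at the terminal vertex $d+1$, which is exactly $JT_{d-i-1}J$ (the flip moves the $\alpha$ entry of $T_{d-i-1}$ from the top-left to the bottom-right); the single off-diagonal $\beta$ linking the two blocks is captured by $\beta F$.

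I expect the only real obstacle to be bookkeeping: verifying that the case $i=1$ really does require the separately stated $X_1$, because then the endpoint $j=1$ is adjacent to $K_{n-d}$ and has $s_1=n-d$ rather than $s_1=1$; the block decomposition inside $X_i$ otherwise works uniformly for all $2\le i\le\lfloor d/2\rfloor$, and the degenerate sub-cases $i=2$ (where $T_{i-1}=[\alpha]$) and $d-i-1\le 0$ (where the lower-right block is empty, forcing $d=2,\,i=1$) need to be checked against the formula but raise no new issue.
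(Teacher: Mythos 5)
Your proposal is correct and follows exactly the route the paper takes: the paper's entire ``proof'' of Theorem \ref{bug} is the sentence ``The use of Theorem \ref{hjoin} yields to:'' preceding the statement, so your explicit computation of $\sigma(A_\alpha(K_{n-d}))$, the shifts $\alpha s_j$, and the entry-by-entry identification of $M(B(i))$ with the block form $X_i$, $\beta F$, $JT_{d-i-1}J$ simply supplies the bookkeeping the paper omits. The only caveat worth recording is the degenerate case you flag at the end: for $d=2$ (so $i=1$ and vertex $i+2=d+1$ is an endpoint of $P_{d+1}$) the last diagonal entry is $\alpha(n-d)$ rather than the $\alpha(n-d+1)$ appearing in $X_1$, so the stated formula really requires $d\ge 3$ --- an imprecision of the theorem statement itself rather than of your argument.
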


Since each $M(B(i))$ is a tridiagonal matrix with nonzero codiagonal entries, its eigenvalues are simple.
\begin{example}
Consider the bug $B_{8,2,3}$
\begin{eqnarray*}
\begin{tikzpicture}
    \tikzstyle{every node}=[draw,circle,fill=black,minimum size=4pt,
                            inner sep=0pt]
                            % 0 grados y se continua con 180+(360/n)
   \draw (-2.5,0) node (1) [label=below:$$] {}
         (-1.5,0) node (2) [label=below:$$] {}
         (-1,-1) node (3) [label=below:$$] {}
         (0,-1.5) node (4) [label=below:$$] {}
         (1.5,0) node (6) [label=below:$$] {}
         (1,-1) node (5) [label=below:$$] {}
         (1,1) node (7) [label=below:$$] {}
         (0,1.5) node (8) [label=below:$$] {}
         (-1,1) node (9) [label=above:$$] {}
         (2.5,0) node (10) [label=below:$$] {}
         (3.5,0) node (11) [label=below:$$] {};

\draw (1)--(2); \draw (2)--(3); \draw (2)--(4); \draw (2)--(5); \draw (3)--(6); \draw (2)--(7); \draw (2)--(8); \draw (2)--(9); \draw (3)--(4); \draw (3)--(5);\draw (3)--(7);\draw (8)--(3);\draw (3)--(9); \draw (4)--(5);\draw (4)--(6); \draw (7)--(4); \draw (4)--(8);\draw (4)--(9);\draw (5)--(6);\draw (5)--(7); \draw (5)--(8); \draw (5)--(9); \draw (6)--(7); \draw (6)--(8);\draw (6)--(9);\draw (6)--(10);\draw (10)--(11); \draw (4)--(5); \draw (7)--(8); \draw (8)--(9);
\end{tikzpicture}
\end{eqnarray*}
This bug is the $P_6-join$ of $G_1=G_2=K_1$, $G_3=K_6$ and $G_i=K_1$ for $i=4,5,6$. From Theorem \ref{bug}, to four decimal places and for $\alpha=0.6$, the eigenvalues of this bug are $(11-5+2)\cdot 0.6 -1=3.8$ with multiplicity $11-5-1=5$ and the eigenvalues of the $ 6 \times 6$ matrix
\begin{equation*}
M(B(2))=M(B_{8,2,3})=\left[
  \begin{array}{cccccc}
    0.6 & 0.4 &  &  & & \\
    0.4 & 4.2 & 0.9798 & &  &\\
     & 0.9798 & 6.2 & 0.9798 & & \\
     &  & 0.9798 & 4.2 & 0.4  & \\
     &  &  & 0.4 & 1.2 & 0.4\\
     & & & &0.4 & 0.6\\
  \end{array}
\right].
\end{equation*}
The eigenvalues of $M(B_{8,2,3})$ are $0.3909, 0.5539, 1.3521, 3.5403, 4.2486, 6.9144$.
\end{example}

%\begin{theorem}
%\label{comp}
%Let $G\in \mathcal{G}_{n,d}.$
%
%$\left( a\right) $ If $d=3$ then the largest $q_{1}\left( G\right) $ can be
%computed as the largest eigenvalue of the symmetric tridiagonal
%matrix $M(B(1))$ of order $4$ with diagonal entries
%\begin{equation*}
%  n-3, 2(n-3),n-2,1
%\end{equation*}
%and codiagonal entries
%\begin{equation*}
%  \sqrt{n-3},\ \sqrt{n-3},1
%\end{equation*}
%
%
%$\left( b\right) \ $If $d\geq 4$ then the largest $q_{1}\left( G\right) $
%can be computed as the largest eigenvalue of the symmetric tridiagonal matrix $M(B(\lfloor \frac{d}{2} \rfloor))$ of order $d+1$ with diagonal entries
%\[
%1,\overset{\left\lfloor \frac{d}{2}\right\rfloor -2}{\overbrace{2,\ldots ,2}}%
%,n-d +1,2(n-d) ,n-d+1\overset{\left\lceil \frac{d}{2}\right\rceil -2}%
%{,\overbrace{2,\ldots ,2}},1
%\]
%and codiagonal entries%
%\[
%\overset{\left\lfloor \frac{d}{2}\right\rfloor -1}{\overbrace{1,\ldots ,1}},%
%\sqrt{n-d},\sqrt{n-d},\overset{\left\lceil \frac{d}{2}\right\rceil -1%
%}{\overbrace{1,\ldots ,1}}.
%\]
%\end{theorem}

%We now prove that $q_1(B(\lfloor \frac{d}{2} \rfloor)$ can be computed as the largest eigenvalue of a symmetric tridiagonal matrix of order $\frac{d}{2}+1$ whenever $d$ is an even integer.

\begin{theorem}
Let $\alpha \in [0,1)$ and $\beta=1-\alpha$. Let $d\geq 4$ be an even integer. Then $\rho_{\alpha}(B_{n-d+2,\frac{d}{2},\frac{d}{2}})$ can be computed as the largest eigenvalue of the symmetric tridiagonal matrix
\begin{equation*}
  \left[
    \begin{array}{cccccc}
      \alpha & \beta &  &  &  &  \\
      \beta & 2\alpha & \ddots &  &  &  \\
       & \ddots & \ddots & \ddots & &  \\
       &  & \ddots & 2\alpha & \beta &  \\
       &  &  & \beta & (n-d+1)\alpha & \beta \sqrt{2(n-d}) \\
       &  &  &  & \beta \sqrt{2(n-d}) & 2\alpha+n-d-1 \\
    \end{array}
  \right]
\end{equation*}
 of order $\frac{d}{2}+1$.
\end{theorem}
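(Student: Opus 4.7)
The plan is to exploit the symmetry of the matrix $M(B(d/2))$ to halve its size. First, I would verify that $M(B(d/2))$ of order $d+1$, supplied by Theorem \ref{bug}, is persymmetric when $d$ is even, i.e.\ $JM(B(d/2))J=M(B(d/2))$ with $J$ the $(d+1)\times(d+1)$ exchange matrix. Reading off the diagonal of $M(B(d/2))$ gives
\[
(\alpha,\,2\alpha,\ldots,2\alpha,\,\alpha(n-d+1),\,2\alpha+n-d-1,\,\alpha(n-d+1),\,2\alpha,\ldots,2\alpha,\,\alpha),
\]
while the codiagonal is $(\beta,\ldots,\beta,\beta\sqrt{n-d},\beta\sqrt{n-d},\beta,\ldots,\beta)$; both sequences are palindromes, so the persymmetry follows.

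Next, since $d+1$ is odd, I would split the eigenspaces of $M(B(d/2))$ into the symmetric ($v_i=v_{d+2-i}$) and antisymmetric ($v_i=-v_{d+2-i}$) parts. The Perron eigenvalue $\rho_\alpha(B_{n-d+2,d/2,d/2})$ is simple with a positive eigenvector, so by uniqueness this eigenvector coincides with its reflection and therefore lies in the symmetric part.

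The key computation is to write $M(B(d/2))v=\lambda v$ for a symmetric $v$ in terms of its first $d/2+1$ components. To obtain a symmetric reduced matrix I would rescale the middle component, setting $w_i=v_i$ for $1\le i\le d/2$ and $w_{d/2+1}=v_{d/2+1}/\sqrt{2}$. The equations for rows $1$ through $d/2-1$ are left unchanged; the row $d/2$ equation, after substitution, produces the entry $\beta\sqrt{2(n-d)}$ in position $(d/2,d/2+1)$; and the middle row equation, after using $v_{d/2+2}=v_{d/2}$ and the rescaling of the middle coordinate, contributes the last row of the displayed matrix with matching off-diagonal $\beta\sqrt{2(n-d)}$. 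Assembling these equations reproduces exactly the $(d/2+1)\times(d/2+1)$ tridiagonal matrix in the statement.

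Finally, the reduced matrix is symmetric tridiagonal with positive codiagonal entries, so Perron--Frobenius gives it a simple largest eigenvalue with a positive eigenvector; this lifts by the reversal/rescaling to a positive symmetric eigenvector of $M(B(d/2))$, which is necessarily the Perron vector, so the two spectral radii agree. I expect the main obstacle to be the careful indexing around the middle row and the rescaling needed to keep the reduced matrix symmetric; once that is set up, the folding of the persymmetric tridiagonal matrix is routine.
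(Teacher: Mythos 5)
Your proposal is correct and is essentially the paper's own argument: your symmetric/antisymmetric folding with the $\sqrt{2}$ rescaling of the middle coordinate is precisely the conjugation by the orthogonal matrix $Q$ that the paper uses to block-diagonalize $M(B(\tfrac{d}{2}))$ into the displayed $(\tfrac{d}{2}+1)\times(\tfrac{d}{2}+1)$ block and a copy of $JSJ$. The only (minor) difference is the final identification of the largest eigenvalue: you invoke Perron--Frobenius positivity of the Perron vector to place it in the symmetric part, while the paper observes that the eigenvalues of $S$ strictly interlace those of the bordered block, so the overall largest eigenvalue must come from that block; both conclusions are valid.
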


\begin{proof}
From Theorem \ref{bug}, the eigenvalues of $B_{n-d+2,\frac{d}{2},\frac{d}{2}}$ are $(n-d+2)\alpha+1$ with multiplicity $n-d-1$ and the eigenvalues of the matrix
\[
M(B(\frac{d}{2}))=\left[
\begin{array}{ccc}
S & \mathbf{b} & 0 \\
\mathbf{b}^{T} & 2\alpha+n-d+1  & \mathbf{b}^{T}J \\
0 & J\mathbf{b} & JSJ%
\end{array}%
\right]
\]%
of order $d+1$ where%
\[
S=\left[
\begin{array}{ccccc}
\alpha & \beta &  &  &  \\
\beta & 2\alpha & \ddots  &  &  \\
& \ddots  & \ddots  & \ddots &  \\
&  & \ddots & 2\alpha & \beta \\
&  &  & \beta & (n-d+1)\alpha%
\end{array}%
\right]
\]%
of order $\frac{d}{2},$ $\mathbf{b}^{T}=\left[
\begin{array}{ccccc}
0 & \cdots  & \cdots  & 0 & \beta \sqrt{n-d}%
\end{array}%
\right] ,\ J$ is the reverse matrix and $0$ is the zero matrix, all of them of the
appropriate sizes. Consider the orthogonal matrix%
\[
Q=\frac{1}{\sqrt{2}}\left[
\begin{array}{ccc}
I & \mathbf{0} & J \\
\mathbf{0}^{T} & \sqrt{2} & \mathbf{0}^{T} \\
-J & \mathbf{0} & I%
\end{array}%
\right] .
\]%
An easy calculation shows that%
\[
QM(B(\frac{d}{2}))Q^{T}=\left[
\begin{array}{ccc}
S & \sqrt{2}\mathbf{b} & 0 \\
\sqrt{2}\mathbf{b}^{T} & 2\alpha +n-d-1 & \mathbf{0}^{T} \\
0 & \mathbf{0} & JSJ%
\end{array}%
\right] .
\]%
Then the eigenvalues of $M(B(\frac{d}{2}))$ are the eigenvalues of $\left[
\begin{array}{cc}
S & \sqrt{2}\mathbf{b} \\
\sqrt{2}\mathbf{b}^{T} & 2\alpha+n-d-1
\end{array}%
\right] $ and the eigenvalues of $S.$ Since the eigenvalues of $S$ strictly interlace
the eigenvalues of \\ $\left[
\begin{array}{cc}
S & \sqrt{2}\mathbf{b} \\
\sqrt{2}\mathbf{b}^{T} & 2\alpha+n-d-1
\end{array}%
\right] ,$ the result follows.
\end{proof}

\end{document}